\newtheorem{theorem}{Theorem}[section]
\newtheorem{corollary}{Corollary}[section]
\newtheorem{lemma}{Lemma}[section]
\newtheorem{remark}{Remark}[section]
\newdefinition{definition}{Definition}[section]
\newcommand{\R}{\mathbb{R}}
\newcommand{\N}{\mathbb{N}}
\let \al=\alpha
\let \be=\beta
\let \var=\varphi
\let \vare=\varepsilon
\let \th=\theta
\let \k=\kappa
\let \p=\partial
\let \med=\medskip
\let\ul=\underline
\let\ol=\overline
\def\q{\quad}
\numberwithin{equation}{section}
\begin{document}
\begin{frontmatter}

\title{A note on permanence of nonautonomous cooperative scalar population models  with delays}
\author{Teresa Faria\footnote{ 
 Fax:+351 21 795 4288; Tel: +351
21 790 4929.}}

\address{Departamento de Matem\'atica and CMAF,\\ Faculdade de Ci\^encias, Universidade de Lisboa,\\ Campo Grande, 1749-016 Lisboa, Portugal\\
tfaria@ptmat.fc.ul.pt}



\begin{abstract}  
For a large family of nonautonomous scalar-delayed differential equations used in population dynamics, 
some criteria for  permanence  are given,  as well as explicit upper and lower bounds for the asymptotic behavior of solutions. The method described here is based on comparative results with auxiliary monotone systems. In particular, it applies to   a  nonautonomous scalar model proposed as an alternative to the usual delayed logistic equation.
\end{abstract}

\begin{keyword}  
Delay differential equation; permanence; cooperative equation; quasimonotone condition.\\
{\it 2010 Mathematics Subject Classification}:  34K12, 34K25, 92D25
\end{keyword}

\end{frontmatter}

\section{Introduction} 
\setcounter{equation}{0}

In \cite{bb14}, Bastinec et al. studied the permanence of the following scalar nonautonomous delay differential equation (DDE) with a quadratic nonlinearity:
\begin{equation}\label{1.1}
\dot x(t)=\sum_{k=1}^m \al_k(t)x(t-\tau_k(t))-\be(t)x^2(t),\q t\ge 0,
\end{equation}
where $m$ is a positive integer, $\al_k,\be:[0,\infty)\to (0,\infty)$ are continuous, $\tau_k: [0,\infty)\to [0,\infty)$ are continuous and uniformly bounded, $0\le  \tau_k(t)\le \tau$ for some $\tau>0$, for $k=1,\dots,m,\, t\ge 0$. 

 In view of the biological interpretation of model \eqref{1.1}, only positive (or nonnegative) solutions of \eqref{1.1} are meaningful. In \cite{bb14}, the authors restrict their attention to solutions of \eqref{1.1} with initial conditions of the form
\begin{equation}\label{1.2}
x(\th)=\var(\th),\ -\tau\le \th\le 0,
\end{equation}
for $\var:[0,\infty)\to (0,\infty)$ continuous,  and added the contraint $\sum_{k=1}^m \tau_k(t)>0$ for all $t\ge 0$.
Using the positivity of the functions $\al_k(t),\be(t)$, it is easy to see that solutions of \eqref{1.1}-\eqref{1.2} are positive whenever they are defined. 

In a previous paper \cite{bb12}, the same authors  considered a simpler nonautonomous model
\begin{equation}\label{1.3}
\dot x(t)=r(t)\Big [\sum_{k=1}^m \al_kx(t-\tau_k(t))-\be x^2(t)\Big],\q t\ge 0,
\end{equation}
where the delay functions $\tau_k(t)$ satisfy all the conditions  above, $r(t)$ is continuous and satisfies $r(t)\ge r_0,\ t\ge 0,$ for some constant $r_0>0$, and $\al_k,\be$ are positive constants, $1\le k\le m$.

Model \eqref{1.1} is a  generalization of the DDE \eqref{1.3}, obtained by considering a more general form of nonautonomous coefficients.  The scalar DDE \eqref{1.3} has a positive equilibrium $K^*={1\over \be} \sum_{k=1}^m \al_k$, which  was proven in \cite{bb12} to be a global attractor of all its positive solutions without any further restriction. 
In general, \eqref{1.1} does not have a positive equilibrium, so criteria for either extinction -- when zero is a global attractor --  or persistence or permanence play a crucial role.   

Here, we set some standard notations. 
For \eqref{1.1} and for the DDEs hereafter,  $C:=C([-\tau,0];\R)$ ($\tau>0$) with the usual sup norm $\|\var\|_\infty=\sup_{\th\in[-\tau,0]}|\var(\th)|$ will be taken as the phase space. For an abstract DDE  in $C$,
\begin{equation}\label{1.4}
\dot x(t)=f(t,x_t),\q t\ge t_0,
\end{equation}
where $f:\Omega\subset \R\times C\to \R$ is continuous, $x_t$ denotes segments of solutions in $C$, $x_t(\th)=x(t+\th),-\tau\le \th\le 0$. If the solutions of initial value problems  are unique,  $x(t;t_0,\var)$ designates the solution  of  $\dot x(t)=f(t,x_t),x_{t_0}=\var$; we use simply $x(t;\var)$ for $x(t;0,\var)$. Even if it is not stated, we shall always assume that $f$ is smooth enough so that  initial value problems  associated with \eqref{1.4} have  unique solutions, with continuous dependence on data.
This is the case if $f(t,\var)$ is  uniformly Lipschitz continuous on the variable $\var\in C$ on each compact subset of $\Omega$. For $C^+:=\{ \var\in C:\var(\th)\ge 0$ for $-\tau\le \th\le 0\}$, initial conditions \eqref{1.2} are written  in the simpler form
$x_0=\var$ with $\var\in int\, (C^+).$
Cf. e.g.~\cite{kuang}, for  the concept of permanence given below, as well as for other standard definitions.

\begin{definition} 
The scalar DDE \eqref{1.4}
is said to be {\bf permanent} (in $S=int (C^+)$, or another $S\subset C^+\setminus \{0\}$) if there are positive constants $m_0,M_0$ with $m_0<M_0$ such that, given any $\var\in S$, there exists $t_*=t_*(\var)$  such that $m_0\le x(t,\var)\le M_0$ for $t\ge t_*$.
\end{definition}

A nice criterion for the permanence of \eqref{1.1} was established in \cite{bb14}, assuming only that the functions $\al_k(t),\be(t)$ are uniformly bounded from above and from below by positive constants.

 \begin{theorem}\label{thm1} \cite{bb14}
Assume that $\sum_{k=1}^m \tau_k(t)>0$ for all $t\ge 0$. If
\vskip 1mm
(h1) there are positive constants $\al_0,A_0,\be_0,B_0$ such that
$$\al_0\le \al_k(t)\le A_0,\q \be_0\le \be(t)\le B_0 \q {\rm for}\q t\ge 0,k=1,\dots,m,
$$
then the  solutions of the initial value problems \eqref{1.1}-\eqref{1.2} are positive and defined on $[0,\infty)$, and \eqref{1.1} is permanent in $int(C^+)$. Moreover,  for every solution $x(t)$ of \eqref{1.1}-\eqref{1.2} the estimates
\begin{equation}\label{1.5}
m_0\le \liminf_{t\to\infty} x(t)\le \limsup_{t\to\infty} x(t)\le M_0,
\end{equation}
hold with 
\begin{equation}\label{1.6}
m_0=\liminf_{t\to\infty}{1\over {\be(t)}} \sum_{k=1}^m\al_k(t),\q M_0=\limsup_{t\to\infty}{1\over {\be(t)}} \sum_{k=1}^m\al_k(t).
\end{equation}
\end{theorem}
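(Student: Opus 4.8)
The plan is to exploit that \eqref{1.1} is \emph{cooperative}: writing $f(t,\phi)=\sum_{k=1}^m\al_k(t)\phi(-\tau_k(t))-\be(t)\phi(0)^2$, whenever $\phi\le\psi$ in $C^+$ with $\phi(0)=\psi(0)$ one has $f(t,\phi)\le f(t,\psi)$, because $\al_k(t)>0$ and the quadratic term depends only on $\phi(0)$. This quasimonotone condition is what legitimizes the comparisons below. First I would settle positivity and global existence: if a solution of \eqref{1.1}--\eqref{1.2} had a first zero at $t_1>0$, then $x>0$ on $[t_1-\tau,t_1)$, and since $\sum_k\tau_k(t_1)>0$ forces some $\tau_k(t_1)>0$, the corresponding term $\al_k(t_1)x(t_1-\tau_k(t_1))$ is strictly positive while $\be(t_1)x(t_1)^2=0$; hence $\dot x(t_1)>0$, contradicting $x\downarrow0$. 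Thus $x(t)>0$ on its maximal interval.

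Next I would bound $x$ above by an elementary argument. Put $M_1:=mA_0/\be_0$ and let $t_M$ be the first time $x$ attains a level $M>\max(\|\var\|_\infty,M_1)$; there $\dot x(t_M)\ge0$, while $x(t_M-\tau_k(t_M))\le M$ and (h1) give
\[
\dot x(t_M)\le\Big(\sum_{k=1}^m\al_k(t_M)\Big)M-\be(t_M)M^2\le M\big(mA_0-\be_0M\big)<0,
\]
a contradiction. Hence $x(t)\le\max(\|\var\|_\infty,M_1)$, which with positivity yields existence on $[0,\infty)$.

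The crux is \emph{persistence}, i.e. a positive uniform lower bound, and here the delayed positive feedback must be controlled. Using $\al_k(t)\ge\al_0$, $\be(t)\le B_0$ and $x>0$,
\[
\dot x(t)\ge\al_0\sum_{k=1}^m x(t-\tau_k(t))-B_0x(t)^2=:F(t,x_t),
\]
so $x$ is a supersolution of $\dot u=F(t,u_t)$. This auxiliary equation is exactly of the form \eqref{1.3} with $r\equiv1$, constant coefficients $\al_0,B_0$, and the same delays $\tau_k(t)$; by \cite{bb12} its positive equilibrium $m_1:=m\al_0/B_0$ attracts all positive solutions. As $F$ is quasimonotone, the comparison principle for cooperative functional differential equations (cf.~\cite{kuang}) gives $x(t)\ge u(t)$ for the solution $u$ with $u_0=\var$, whence $\liminf_{t\to\infty}x(t)\ge\lim_{t\to\infty}u(t)=m_1>0$. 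I expect this to be the main obstacle: a pointwise running-minimum inequality only yields $m_1$ at running-minimum points and does not by itself bound $\liminf x$, so one genuinely needs the monotone comparison to propagate the bound --- this is what the cooperative structure buys. (Symmetrically, comparison with the $(A_0,\be_0)$-model reconfirms $\limsup x\le M_1$.)

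Finally, with $0<m_1\le\liminf x\le\limsup x\le M_1<\infty$ secured, I would sharpen the constants to $m_0,M_0$ via the fluctuation lemma. Choosing $t_n\to\infty$ with $x(t_n)\to\ol x:=\limsup x$ and $\dot x(t_n)\to0$, and using $x(t_n-\tau_k(t_n))\le\ol x+o(1)$, division of $\dot x(t_n)=\sum_k\al_k(t_n)x(t_n-\tau_k(t_n))-\be(t_n)x(t_n)^2$ by $\be(t_n)\ge\be_0$ yields, in the limit, $\ol x^2\le M_0\,\ol x$, i.e. $\ol x\le M_0$. Symmetrically, at minima $s_n\to\infty$ with $x(s_n)\to\ul x:=\liminf x>0$, $\dot x(s_n)\to0$ and $x(s_n-\tau_k(s_n))\ge\ul x-o(1)$, one gets $\ul x^2\ge m_0\,\ul x$, i.e. $\ul x\ge m_0$. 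Since $\frac1{\be(t)}\sum_k\al_k(t)$ lies between $m_1$ and $M_1$, the bounds $m_0\ge m_1>0$ and $M_0\le M_1$ are positive constants independent of $\var$, so \eqref{1.5}--\eqref{1.6} hold and \eqref{1.1} is permanent in $int(C^+)$.
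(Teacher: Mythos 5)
Your proposal is correct and takes essentially the same route as the paper: bound $x$ between solutions of cooperative constant-coefficient equations of the form \eqref{1.3}, invoke the global attractivity result of \cite{bb12} for their positive equilibria $m\al_0/B_0$ and $mA_0/\be_0$ to get permanence, and then sharpen to the constants $m_0,M_0$ in \eqref{1.6} via the fluctuation lemma (where, as you correctly note, the previously secured positive lower bound is what allows cancelling $\ul x$ and $\ol x$). The only differences are cosmetic: you prove positivity and dissipativity by elementary first-crossing arguments, where the paper relies on Smith's comparison theorem (Theorem 5.1.1 of \cite{smith}) for both bounds.
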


\smallskip

The proof of this result in \cite{bb14} is  broken into several steps, and takes little advantage of the criterion established  previously by the authors in \cite{bb12}.
Here, we present an alternative proof  based on the fact that  equations \eqref{1.1} and \eqref{1.3}  satisfy the quasimonotone condition. In fact, we shall show later (cf.~Theorem 3.2) that we  need not  assume that $\sum_{k=1}^m \tau_k(t)>0$ for all $t\ge 0$, and that initial conditions may be taken in the larger set
$C_0:=\{ \var\in C^+:\var (0)>0\}.$ We recall that a scalar DDE \eqref{1.4}   satisfies the  {\bf quasimonotone condition} (on the cone $C^+$) if for any $t\ge t_0$ and $\var,\psi \in C^+$ with $\var\le \psi$ and $\var(0)=\psi (0)$, then $f(t,\var)\le f(t,\psi)$ (cf.~\cite{smith}, p.~78). Under this condition, the semiflow is monotone. If $d_\var f(t,\var)$ exists, is continuous on $[t_0,\infty)\times C^+$, and $d_\var f(t,\var)\psi\ge 0$ for $\var,\psi\in C^+$ and $\psi(0)=0$, then \eqref{1.4} is  {\bf cooperative}; cooperative equations satisfy the quasimonotone condition.
Here, we abuse the terminology, and refer to equations satisfying the quasimonotone condition as  {\it cooperative}.

\begin{proof}[Alternative proof of Theorem \ref{thm1}]
 Let $x(t)=x(t;\var)$ be the solution for an initial value problem \eqref{1.1}-\eqref{1.2}, defined on some maximal interval $[0,a)$ with $a\in (0,\infty]$. Then $x(t)$ satisfies the inequality
$\dot x(t)\le A_0\sum_{k=1}^m x(t-\tau_k(t))-\be_0x^2(t),\ t\ge 0.
$
Comparing with the  cooperative equation
\begin{equation}\label{1.7}
\dot u(t)= A_0\sum_{k=1}^m u(t-\tau_k(t))-\be_0u^2(t),\q t\ge 0,
\end{equation}
by Theorem 5.1.1 of \cite{smith} we have that $x(t)$ is bounded and defined on $[0,\infty)$, with $x(t)\le u(t),t\ge 0$, where
$u(t)$ is the solution of \eqref{1.7}-\eqref{1.2}. Moreover, by  \cite{bb12}  the equilibrium  $u^*={{mA_0}\over {\be_0}} $ is a global attractor for all positive solutions of \eqref{1.7}. We therefore conclude that
$ \limsup_{t\to\infty} x(t)\le u^*.$
In a similar way, we have
$\dot x(t)\ge \al_0\sum_{k=1}^m x(t-\tau_k(t))-B_0x^2(t),\ t\ge 0,
$
and by comparison with  the  cooperative equation
$$\dot v(t)= \al_0\sum_{k=1}^m v(t-\tau_k(t))-B_0v^2(t),\q t\ge 0,
$$
we obtain the lower bound $ \liminf_{t\to\infty} x(t)\ge v^*:={{m\al_0}\over {B_0}}.$ Hence, \eqref{1.1} is permanent. 

We now prove the estimates in \eqref{1.5}-\eqref{1.6}. Denote $\underline{x}= \liminf_{t\to\infty} x(t), \bar x= \limsup_{t\to\infty} x(t)$. By the fluctuation lemma, there is a sequence $(t_n)$, with $t_n\to\infty$ and $x(t_n)\to \bar x, \dot x(t_n)\to 0$. Fix  a small $\vare >0$, and take $T>0$  so that $x(t)\le \bar x+\vare$ for $t\ge T+\tau$. For $n$ large enough so that $t_n-\tau_k(t_n)\ge T$ and $x(t_n)\ge \bar x-\vare$, using \eqref{1.1} we get
\begin{equation*}
\begin{split}
\dot x(t_n)&=\be(t_n)\left [\sum_{k=1}^m {{\al_k(t_n)}\over {\be(t_n)}}x(t_n-\tau_k(t_n))-x^2(t_n)\right ]\\
&\le \be(t_n)\left [ {{\bar x+\vare}\over {\be(t_n)}} \sum_{k=1}^m \al_k(t_n)-(\bar x-\vare)^2\right ]\\
&\le \be(t_n)\bar x \left [ {1\over {\be(t_n)}} \sum_{k=1}^m \al_k(t_n)-\bar x\right ]+O(\vare).
\end{split}
\end{equation*}
By letting $n\to \infty$ and $\vare\to 0^+$, we obtain
$$0\le \limsup_n\be(t_n) \left [ {1\over {\be(t_n)}} \sum_{k=1}^m \al_k(t_n)-\bar x\right ],$$
which implies $M_0-\bar x\ge 0$, for $M_0$ as in \eqref{1.6}.
The other inequality in \eqref{1.5} is proven in a similar way. \end{proof}

The above method  used to prove Theorem \ref{thm1} motivated us to extend the same arguments to other scalar DDEs from population dynamics. The idea is to consider a broad class of {\it cooperative} differential equations with (possibly time-varying)  delays and nonautonomous coefficients, and use the theory of monotone dynamical systems to obtain its permanence by comparison with two auxiliary differential equations with constant coefficients, for which a globally attractive positive equilibrium exists. 

As a particularly important example,  we have in mind to apply this approach to establish the permanence of  the following scalar population model:
\begin{equation}\label{1.8}
\dot x(t)=\sum_{k=1}^m {{\al_k(t)x(t-\tau_k(t))}\over {1+\be_k(t)x(t-\tau_k(t))}}
-\mu(t)x(t)-\k(t)x^2(t),\q t\ge 0,
\end{equation}
where  $\al_k,\k:[0,\infty)\to (0,\infty), \be_k,\mu,\tau_k:[0,\infty)\to [0,\infty)$ are continuous and bounded, $1\le k\le m$.
We emphasize that  the study of the permanence  of \eqref{1.8} (with the additional constraints $\be_k(t)>0,\mu(t)>0, \sum_{k=1}^m \tau_k(t)>0$ on $[0,\infty)$)
was proposed in \cite{bb14} as a topic for further research. Obviously, Theorem \ref{thm1} applies only  to the very concrete model \eqref{1.1}, and therefore cannot be invoked to deal with \eqref{1.8}. For $N(t)=x(t)$, eq. \eqref{1.8} with $m=1$ reads as
$$N'(t)={{\al(t) N(t-\tau(t))}\over {1+\be(t)N(t-\tau(t))}}-\mu(t) N(t)-\k(t)N^2(t),
$$
which is (after a scaling)  the nonautonomous version of 
\begin{equation}\label{1.9}
N'(t)={{\gamma \mu N(t-\tau)}\over {\mu e^{\mu \tau}+k (e^{\mu \tau}-1)N(t-\tau)}}-\mu N(t)-\k N^2(t),
\end{equation}
where $\gamma,\mu,\k ,\tau >0$. Eq. \eqref{1.9} was derived by Arino et al.~[1] as an
 alternative formulation for the classical delayed logistic equation, also known as Wright's equation, given by 
 $\dot N(t)=rN(t)(1-N(t-\tau)/K),$  where $r$ denotes the intrinsic growth rate, $K$ is the carrying capacity, and $\tau$ the maturation delay. The coefficients in this  logistic equation are related to the ones in \eqref{1.9}   by $r=\gamma -\mu$ (for $\gamma, \mu$ the birth and mortality rates, respectively) and $K=(\gamma -\mu)/\k$.
 In Section 3, we shall study a class of scalar DDEs which includes \eqref{1.8} as a particular case. As another illustration of our technique, the permanence of a nonautonomous Nicholson's blowflies equation will also be studied.

\section{Auxiliary results on stability of equilibria}

In this section, we  address the global attractivity  of nonnegative equilibria for a family of nonautonomous cooperative scalar DDEs.
We start  with an auxiliary lemma from  \cite{faria}.

\begin{lemma}\label{lem21}  {} \cite{faria} Consider a scalar equation \eqref{1.4} in $C=C([-\tau,0];\R)$,
with $f:[t_0,\infty)\times C\to\R$  continuous, $t_0\in\R$, and let $S\subset C$ be an invariant set for the semiflow of
\eqref{1.4}. Suppose that $f$ satisfies the condition
\vskip 1mm
{(h2)} for $t\ge t_0$ and $\var\in S$ with $|\var(\th)|<|\var(0)|$, $\th\in[-\tau,0)$, then $\var(0)f(t,\var)<0.$ \vskip 1mm
\parindent =0cm Then, the solutions of \eqref{1.4} with initial conditions $x_0=\var\in S$ are defined and bounded on $[t_0,\infty)$
and $|x(t)|\le \|\var\|_\infty$ for $t\ge t_0$.
\end{lemma}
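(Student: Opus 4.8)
The plan is to prove the a priori estimate $|x(t)|\le \|\var\|_\infty$ on the full maximal interval of existence by a barrier (first-crossing) argument, and then to upgrade this bound to global existence via the standard continuation theorem for delay equations. Write $M=\|\var\|_\infty$ and let $x(t)=x(t;t_0,\var)$ be the solution on its maximal interval $[t_0,a)$, $a\in(t_0,\infty]$. The guiding observation is that condition (h2) is exactly the statement that $\frac{d}{dt}\big(\tfrac12 x(t)^2\big)=x(t)f(t,x_t)<0$ whenever the current magnitude $|x(t)|$ strictly exceeds all past magnitudes on $[t-\tau,t)$; in other words, $|x|$ is forced to decrease precisely at any instant where it would try to set a new strict record.

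First I would introduce a slack parameter $\vare>0$ and aim to show $|x(t)|<M+\vare$ on $[t_0,a)$; letting $\vare\to 0^+$ then gives $|x(t)|\le M$. Since $|x(t_0+\th)|=|\var(\th)|\le M<M+\vare$ for $\th\in[-\tau,0]$, the strict inequality holds on the initial segment. Suppose for contradiction it fails somewhere and set $t_1=\inf\{t>t_0:|x(t)|=M+\vare\}$. By continuity $|x(t_1)|=M+\vare$ with $t_1>t_0$, while $|x(s)|<M+\vare$ for every $s\in[t_0-\tau,t_1)$. Writing $\psi:=x_{t_1}$, this yields $|\psi(\th)|=|x(t_1+\th)|<M+\vare=|x(t_1)|=|\psi(0)|$ for all $\th\in[-\tau,0)$; moreover $\psi\in S$ because $\var\in S$ and $S$ is invariant under the semiflow.

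Then I would invoke (h2) at $t=t_1$ with this $\psi$, obtaining $x(t_1)\dot x(t_1)=x(t_1)f(t_1,x_{t_1})<0$, that is $\frac{d}{dt}\big(\tfrac12 x(t)^2\big)\big|_{t=t_1}<0$. On the other hand, since $\tfrac12 x(t)^2<\tfrac12(M+\vare)^2=\tfrac12 x(t_1)^2$ for all $t<t_1$, the difference quotients $\frac{\frac12 x(t_1)^2-\frac12 x(t)^2}{t_1-t}$ are positive as $t\uparrow t_1$, so $\frac{d}{dt}\big(\tfrac12 x(t)^2\big)\big|_{t=t_1}\ge 0$ (the derivative is genuinely two-sided because $x\in C^1$ on $(t_0,a)$). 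This contradiction shows $|x(t)|<M+\vare$ throughout $[t_0,a)$, whence $|x(t)|\le\|\var\|_\infty$.

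Finally, this uniform bound rules out finite-time blow-up: if $a<\infty$, the solution would remain in a bounded (hence precompact) subset of $C$ on $[t_0,a)$, contradicting the fact that a noncontinuable solution must eventually leave every compact set; therefore $a=\infty$, and both assertions of the lemma follow. The step I expect to require the most care is the role of the slack $\vare$: condition (h2) demands the \emph{strict} inequality $|\psi(\th)|<|\psi(0)|$ on all of $[-\tau,0)$, which could fail if the supremum $M$ were already attained by the initial data (for instance if $|\var(\th_0)|=|\var(0)|=M$). Testing against the strictly larger threshold $M+\vare$ is exactly what forces the hypothesis of (h2) to hold at the first crossing time $t_1$, and this point is the crux that must not be glossed over.
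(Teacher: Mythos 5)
The paper never proves this lemma: it is quoted from reference \cite{faria}, and only its statement is used here (in the Appendix, to establish Theorem 2.1). So your argument can only be compared with the standard proof of such results, and that is essentially what you give: an $\varepsilon$-slack first-crossing (barrier) argument. Your core step is correct and complete. Since $|x(t_0+\theta)|\le M<M+\varepsilon$ on the initial segment, a first crossing time $t_1$ of the level $M+\varepsilon$ satisfies $|x(t_1+\theta)|<M+\varepsilon=|x(t_1)|$ for all $\theta\in[-\tau,0)$ and $x_{t_1}\in S$ by invariance, so (h2) yields $x(t_1)\dot x(t_1)<0$, contradicting the inequality $\frac{d}{dt}\bigl(\tfrac12 x^2\bigr)(t_1)\ge 0$ forced by crossing the level from below; letting $\varepsilon\to 0^+$ gives $|x(t)|\le\|\varphi\|_\infty$ on the maximal interval. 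You also correctly identified the crux: the slack $\varepsilon$ is what makes the \emph{strict} hypothesis of (h2) available even when $\|\varphi\|_\infty$ is attained by the initial data, and the differentiability of the solution at $t_1>t_0$ makes the sign comparison legitimate.

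The one genuine flaw is in the continuation step: bounded subsets of $C=C([-\tau,0];\R)$ are \emph{not} precompact (the space is infinite-dimensional), so the parenthetical ``bounded (hence precompact)'' is false, and the version of the noncontinuation theorem phrased in terms of compact sets cannot be applied as you state it. Two correct repairs: (i) invoke the continuation theorem for DDEs under the regularity tacitly assumed in this setting, namely $f$ completely continuous (mapping closed bounded subsets of $[t_0,\infty)\times C$ into bounded sets, as is the case for every model in the paper): a noncontinuable solution must leave every closed bounded set, which the bound $|x(t)|\le M$ with $a<\infty$ contradicts; or (ii) argue directly that if $a<\infty$, then $f$ is bounded along the bounded trajectory, so $\dot x$ is bounded on $[t_0,a)$, hence $x$ is uniformly continuous and extends continuously to $t=a$; then $x_t\to x_a$ in $C$ as $t\to a^-$ and local existence at $(a,x_a)$ extends the solution, contradicting maximality. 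Note that either repair genuinely needs $f$ to be bounded on bounded sets: mere continuity of $f$ on $[t_0,\infty)\times C$ does not give this in infinite dimensions, which is precisely why your shortcut does not go through as written. This is a localized and fixable defect; the substance of the lemma, the a priori estimate, is proved correctly.
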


Consider now the family of scalar-delayed population models   given by
\begin{equation}\label{2.1}
\dot x(t)=\rho (t)\Big [R\big (x(t-\tau_1(t)),\dots,x(t-\tau_m(t))\big )-D(x(t))\Big],\q t\ge 0,
\end{equation}
where: $m\in\N$, $\tau_k:[0,\infty)\to\R$ are continuous and $0\le \tau_k(t)\le \tau$ for some positive constant $\tau,$ for $t\ge 0, 1\le k\le m$; $\rho:[0,\infty)\to (0,\infty),R:\R^m_+:=[0,\infty)^m\to [0,\infty), D:[0,\infty)\to [0,\infty)$ are continuous with  $R(0,\dots, 0)=D(0)=0$. Moreover, let $R,D$ be  smooth enough in order to ensure uniqueness of solutions. The condition $R(0,\dots, 0)=0$ is not essential for our analysis, but it corresponds to the general framework in  population dynamics models, since zero should be a steady state. We note that
 the particular case
$\dot x(t)= R(x(t-\tau))-D(x(t)),\ t\ge 0,$
was studied by Arino et al~[1].

 Write \eqref{2.1} as $\dot x(t)=f(t,x_t)$, and observe that  $f$ satisfies Smith's quasimonotone condition. As before, due to  biological reasons we are only interested in positive solutions. Rather than
 initial conditions in
$int(C^+),$ we shall consider $x_0=\var$, with $\var$ in the larger set  of admissible initial conditions
$$C_0=\{ \var\in C^+:\var (0)>0\}.$$
 It is clear that for $t\ge 0$ and $\var\in C^+$ with $\var (0)=0$, then $f(t,\var)\ge 0$. This implies that solutions of \eqref{2.1} with  initial conditions $x_0\in C^+$ are nonnegative \cite{smith}.  For $x(t)=x(t;\var), \var\in C_0$, $x(t)$ satisfies the ordinary differential inequality $\dot x(t)\ge -\rho(t)D(x(t))$, thus conditions $D(0)=0$ and  $x(0)=\var(0)>0$ yield $x(t)>0$ whenever it is defined. 
 In what follows, concepts as permanence and global asymptotic stability always refer to the solutions with  initial conditions $x_0=\var\in C_0$.

\med

In the sequel, the following assumptions will be considered:

\begin{itemize}

\item[(A1)]  $\rho(t)$ is bounded from below by a positive constant, i.e., $\rho(t)\ge \rho_0>0$ for all $t\ge 0$;
\item[(A2)]  $R(y_1,\dots,y_m)$ is nondecreasing in $y_k\in [0,\infty)$, $k=1,\dots,m$;
\item[(A3)] there exists $K\ge 0$ such that
\begin{equation}\label{2.2}(x-K) (R(x,\dots,x)-D(x) )<0\q {\rm for}\q x>0,x\ne K.
\end{equation}

\end{itemize}

Note that if 
there exists $K>0$ such that \eqref{2.2} holds, then \eqref{2.1} has the equilibria 0 and  $K$; whereas 0 is the unique equilibrium if \eqref{2.2} is satisfied with $K=0$.

Next result is a  generalization of Theorem 3.3 in  [1], and its proof can be found in the Appendix.
For related results, see \cite{faria} and Chapter 4 of Kuang's monograph \cite{kuang}.

\begin{theorem}\label{thm21}
 Consider equation \eqref{2.1}, and assume (A1)--(A3). Then the equilibrium $K$  in (A3) is globally asymptotically stable (GAS) in the set of  solutions with initial conditions in $C_0$.  
\end{theorem}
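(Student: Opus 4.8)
The plan is to exploit the quasimonotonicity encoded in (A2) together with the sign condition (A3), using a barrier (first-crossing) argument for the one-sided bounds and the fluctuation lemma for the asymptotic ones. Throughout, write $x(t)=x(t;\var)$ for $\var\in C_0$; as already observed in the text, $x(t)>0$ wherever it is defined. First I would settle global existence and boundedness. Putting $b:=\max(\|\var\|_\infty,K)$, I claim $x(t)\le b$ on the maximal interval of existence. Otherwise $x$ crosses some level $L>b\ge K$ for the first time at a point $t_L$, where $x(t_L)=L$, $x(s)\le L$ for $s\le t_L$, and hence $\dot x(t_L)\ge 0$. Since every delayed argument satisfies $x(t_L-\tau_k(t_L))\le L$, monotonicity (A2) gives $R(x(t_L-\tau_1(t_L)),\dots,x(t_L-\tau_m(t_L)))\le R(L,\dots,L)$, so that $\dot x(t_L)\le \rho(t_L)\big(R(L,\dots,L)-D(L)\big)<0$ by (A3) because $L>K$ — contradicting $\dot x(t_L)\ge 0$. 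Thus $x$ is bounded and, by continuation, defined on $[0,\infty)$. This is precisely the place where I expect Lemma~\ref{lem21} to be invoked, its hypothesis (h2) being exactly the sign condition $\var(0)f(t,\var)<0$ available on the region $\var(0)>K$.

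For attraction, since $x$ is now bounded I set $\bar x=\limsup_{t\to\infty}x(t)$ and $\underline x=\liminf_{t\to\infty}x(t)$, with $0\le \underline x\le\bar x\le b$. By the fluctuation lemma there is $t_n\to\infty$ with $x(t_n)\to\bar x$ and $\dot x(t_n)\to 0$. Fixing $\vare>0$, using $x(t_n-\tau_k(t_n))\le \bar x+\vare$ eventually, applying (A2), and dividing by $\rho(t_n)\ge\rho_0>0$, the limits $n\to\infty$ then $\vare\to 0^+$ yield $R(\bar x,\dots,\bar x)-D(\bar x)\ge 0$; by (A3) this forces $\bar x\le K$. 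The symmetric argument at $\underline x$, \emph{provided} $\underline x>0$ so that $x(t_n-\tau_k(t_n))\ge\underline x-\vare>0$ and (A2) may be applied downward, gives $R(\underline x,\dots,\underline x)-D(\underline x)\le 0$, hence $\underline x\ge K$. Combining $K\le\underline x\le\bar x\le K$ then gives $x(t)\to K$.

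The one genuinely delicate point, and the step I expect to be the main obstacle, is persistence: ruling out $\underline x=0$ when $K>0$. A crude fluctuation estimate is not enough here, because near the origin the sign in (A3) is correct but degenerate, $R(x,\dots,x)-D(x)\to 0$ as $x\to 0^+$, so no uniform positive lower bound on the net growth is available. I would instead argue at record lows. If $\inf_{t\ge 0}x(t)=0$, then, since $x>0$ and $\var(0)>0$, one can pick $\eta_n\downarrow 0$ and first-crossing times $s_n=\inf\{t:x(t)<\eta_n\}\to\infty$ with $x(s_n)=\eta_n$, $x(s)\ge\eta_n$ for $s\le s_n$, and therefore $\dot x(s_n)\le 0$. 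For $n$ large every delayed value obeys $x(s_n-\tau_k(s_n))\ge\eta_n$, so (A2) gives $R(\cdots)\ge R(\eta_n,\dots,\eta_n)$ and hence $\dot x(s_n)\ge\rho_0\big(R(\eta_n,\dots,\eta_n)-D(\eta_n)\big)>0$ by (A3), since $0<\eta_n<K$. This contradicts $\dot x(s_n)\le 0$, so $\inf_{t\ge 0}x(t)>0$ and in particular $\underline x>0$, closing the gap above. (When $K=0$ no persistence is needed: the upper estimate already gives $\bar x\le 0$, i.e. $x(t)\to 0=K$.)

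Finally, Lyapunov stability of $K$ follows from the same barrier idea applied on both sides, so that GAS — not merely global attraction — is obtained. Given $\vare>0$, choose $\de\in(0,\vare)$ with $K-\de>0$ (when $K>0$): the first-crossing argument at the level $K+\de>K$ shows that $\var(\th)\le K+\de$ for all $\th$ implies $x(t)\le K+\de$ for all $t$, while the record-low argument at the level $K-\de\in(0,K)$ shows that $\var(\th)\ge K-\de$ for all $\th$ implies $x(t)\ge K-\de$ for all $t$. Hence $\sup_{\th}|\var(\th)-K|<\de$ forces $|x(t)-K|<\vare$ for all $t\ge 0$. Together with the attraction established above, this proves that $K$ is globally asymptotically stable in $C_0$.
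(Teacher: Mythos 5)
Your proof is correct, but it takes a genuinely different route from the paper's on every step except the central fluctuation argument. The paper's proof (in the Appendix) splits into the cases $K=0$ and $K>0$; for $K>0$ it performs the change of variables $y=x/K-1$ and verifies hypothesis (h2) of Lemma~\ref{lem21}, which then delivers boundedness, global existence and Lyapunov stability in one stroke via the norm estimate $|y(t)|\le\|y_{t_0}\|_\infty$; that same estimate, re-applied from a later time $t_0$ at which $y(t)\le u+\vare<v$ already holds, is what rules out $\liminf_{t\to\infty}x(t)=0$ (the paper's Case~2, showing $v<1$). You instead work directly with $x$, replacing Lemma~\ref{lem21} by elementary first-crossing (barrier) arguments on both sides of $K$: crossing a level $L>\max(\|\var\|_\infty,K)$ from below, or a level $\eta\in(0,K)$ from above, forces via (A2)--(A3) a derivative of the wrong sign at the crossing time. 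This gives boundedness, persistence, and (run with the levels $K\pm\de$) stability, all without the translation $y=x/K-1$ and without any external lemma; the identification $\underline x=\bar x=K$ by the fluctuation lemma is then essentially identical to the paper's Cases 1 and 2. What your version buys is self-containedness and a more transparent persistence step --- the paper's Case 2 (choosing $u+\vare<v$ and using the norm bound to force $-v>-1$) is the subtlest point of its proof, and your record-low argument avoids it entirely. The one place where you owe a sentence of rigor: you assert $s_n\to\infty$ for the first-crossing times of the levels $\eta_n\downarrow 0$ without proof, and this matters because $\var\in C_0$ may vanish somewhere on $[-\tau,0)$, so the inequality $x(s_n-\tau_k(s_n))\ge\eta_n$ is only available once $s_n\ge\tau$. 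It suffices to note that $x$ is positive and continuous, hence $\min_{[0,\tau]}x>0$, so $s_n>\tau$ as soon as $\eta_n<\min_{[0,\tau]}x$ (alternatively: if $s_n\to s^*<\infty$ then $x(s^*)=\lim_n\eta_n=0$, contradicting positivity). With that observation added, your argument is complete.
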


The same proof works with minimal changes for  equations with distributed delays, rather than  discrete delays, as stated below.

\begin{theorem}\label{thm22} Consider equation 
\begin{equation}\label{2.3}
\dot x(t)=\rho (t)\Big [R\big (L_1(t,x_t),\dots,L_m(t,x_t)\big)-D(x(t) )\Big ],\q t\ge 0,
\end{equation}
where $R,D$ are as in \eqref{2.1}, and $L_k:[0,\infty)\times C\to\R$ are defined by
$$L_k(t,\var)=\int_{-\tau}^0 \var (\th) \, d_\th\eta_k(t,\th),\q t\ge 0, \var\in C,$$
for some measurable functions $\eta_k:[0,\infty)\times[-\tau,0]\to\R$ such that $\eta_k(t,\cdot)$ is nondecreasing on $[-\tau,0]$ with $\eta_k(t,0)-\eta_k(t,-\tau)=1$ for all $t\ge 0$, $k=1,\dots,m$.
If (A1)--(A3) are satisfied, then the equilibrium $K$  in (A3) is GAS in the set of  solutions with initial conditions in $C_0$. 
\end{theorem}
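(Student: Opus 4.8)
The plan is to transcribe the proof of Theorem~\ref{thm21} almost verbatim, after isolating the two features of the pointwise delayed terms $x(t-\tau_k(t))$ that this proof really uses and checking that each functional $L_k(t,\cdot)$ enjoys them. Under the normalization $\eta_k(t,0)-\eta_k(t,-\tau)=1$ with $\eta_k(t,\cdot)$ nondecreasing, $L_k(t,\cdot)$ is a time-dependent weighted average of unit total mass, so for every $\var\in C^+$ one has $\min_{\th\in[-\tau,0]}\var(\th)\le L_k(t,\var)\le \max_{\th\in[-\tau,0]}\var(\th)$, and, crucially, $L_k(t,\var)=c$ whenever $\var\equiv c$ is constant. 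Together with (A2), the range property shows that \eqref{2.3} is again cooperative (if $\var\le\psi$ with $\var(0)=\psi(0)$ then $L_k(t,\var)\le L_k(t,\psi)$, hence $f(t,\var)\le f(t,\psi)$), while the constancy property shows that the equilibria of \eqref{2.3} are exactly $0$ and $K$, as for \eqref{2.1}.

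First I would dispose of positivity and boundedness. Positivity of $x(t)=x(t;\var)$ for $\var\in C_0$ is already noted in the text. For an a priori bound I would use (A3) together with the upper estimate $L_k(t,x_t)\le \max_{s\in[t-\tau,t]}x(s)$: as soon as the recent history lies above $K$, the bracket in \eqref{2.3} becomes negative, so large values are driven down; this yields boundedness and global existence on $[0,\infty)$, either through Lemma~\ref{lem21} on an appropriate invariant set or by the same direct comparison used for Theorem~\ref{thm21}.

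Next I would run the fluctuation-lemma argument in both directions. Writing $\bar x=\limsup_{t\to\infty}x(t)$ and $\ul x=\liminf_{t\to\infty}x(t)$, choose $t_n\to\infty$ with $x(t_n)\to\bar x$ and $\dot x(t_n)\to0$; since $\rho(t_n)\ge\rho_0>0$, the bracket tends to zero, and because $L_k(t_n,x_{t_n})\le\bar x+\vare$ for large $n$, (A2) gives $R(L_1(t_n),\dots,L_m(t_n))\le R(\bar x+\vare,\dots,\bar x+\vare)$. Letting $n\to\infty$ and then $\vare\to0^+$ yields $R(\bar x,\dots,\bar x)-D(\bar x)\ge0$, so (A3) forces $\bar x\le K$. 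A symmetric computation along a $\ul x$-sequence, now using the lower estimate $L_k(s_n,x_{s_n})\ge\ul x-\vare$, gives $R(\ul x,\dots,\ul x)-D(\ul x)\le0$, which by (A3) excludes $0<\ul x<K$. If $K=0$ this already gives $x(t)\to0=K$; if $K>0$, once $\ul x>0$ is known we get $\ul x\ge K$, and with $\ul x\le\bar x\le K$ the two bounds coincide, so $x(t)\to K$.

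The hard part will be the persistence step, i.e.\ excluding $\ul x=0$ when $K>0$, because the fluctuation estimates alone are consistent with $\ul x=0$ (both $R(0,\dots,0)$ and $D(0)$ vanish, so the diagonal relation degenerates at $0$). To close this I would work inside the monotone framework: for any $b\in(0,K)$ the constant $\hat b\equiv b$ is a \emph{strict} lower solution of \eqref{2.3}, since by (A3) and the identity $L_k(t,\hat b)=b$ one has $f(t,\hat b)=\rho(t)\big(R(b,\dots,b)-D(b)\big)>0$; combining this with the cooperativity of the semiflow and with the upper bound $\bar x\le K$ just obtained (which forces the history to stay below $K+\vare$ eventually), I would produce a uniform positive lower bound for $x(t)$, contradicting $\ul x=0$. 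This is precisely the point at which ``minimal changes'' must be checked most carefully, and where the normalization of the $\eta_k$ — through the identity $L_k(t,\hat c)=c$ — is indispensable, since it is what makes the constant sub- and super-solutions of \eqref{2.1} remain sub- and super-solutions of \eqref{2.3}.
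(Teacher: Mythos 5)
Your reading of what ``minimal changes'' must mean is exactly right, and it matches the paper: the paper offers no separate proof of Theorem~\ref{thm22}, but simply asserts that the Appendix proof of Theorem~\ref{thm21} works with $L_k(t,x_t)$ in place of the point evaluations $x(t-\tau_k(t))$, and the only properties of the $L_k$ that proof needs are the two you isolate --- positivity and normalization, giving $\min_{\th\in[-\tau,0]}\var(\th)\le L_k(t,\var)\le\max_{\th\in[-\tau,0]}\var(\th)$ on $C^+$ (hence the quasimonotone condition, and the inequality $L_k(t,\var)\le\var(0)$ when $\var(\th)<\var(0)$ for $\th<0$, which is what (h2) requires), and $L_k(t,\hat c)=c$ on constants, so that $0$ and $K$ remain the only equilibria. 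Your fluctuation computations also coincide with the paper's, which performs them in the shifted variable $y=x/K-1$. The genuine divergence is in your ``hard part'': the paper does not use subsolutions to rule out $\liminf_{t\to\infty}x(t)=0$. Instead it applies Lemma~\ref{lem21} to the shifted equation (A.1): with $u=\limsup_{t\to\infty}y(t)$ and $-v=\liminf_{t\to\infty}y(t)$, restarting at a time $t_0$ after which $y(t)\le u+\vare<v$, the lemma gives $|y(t)|\le \max\{u+\vare,\max_{s\in[t_0-\tau,t_0]}(-y(s))\}<1$ for $t\ge t_0$, so $v<1$, and then the liminf fluctuation argument at the point $K(1-v)>0$ contradicts \eqref{2.2}. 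Your alternative --- constant strict subsolutions plus monotone comparison --- is valid and arguably more natural here, but you left it unassembled: the missing (easy) step is that positivity of $x$ on $[0,\tau]$ gives $x_\tau\ge\hat b$ pointwise for $b:=\min\{\min_{[0,\tau]}x,\,K/2\}>0$, after which Theorem 5.1.1 of \cite{smith}, with the constant $b$ as subsolution (indeed $0<\rho(t)\,(R(b,\dots,b)-D(b))$ by (A3) and $L_k(t,\hat b)=b$), yields $x(t)\ge b$ for all $t\ge\tau$; the upper bound $\limsup_{t\to\infty}x(t)\le K$ that you invoke in this step plays no role.

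The one real gap: the theorem claims GAS, i.e.\ Lyapunov stability of $K$ \emph{plus} global attractivity, and your proposal proves only attractivity. In the paper, stability comes for free from Lemma~\ref{lem21}, since (h2) for the shifted equation gives $|y(t)|\le\|y_0\|_\infty$, i.e.\ $|x(t)-K|\le\|x_0-\hat K\|_\infty$. Within your framework the repair is cheap and uses only tools you already set up: for $0<\de<K$, the constants $K-\de$ and $K+\de$ are, by (A3) and the identity $L_k(t,\hat c)=c$, a strict subsolution and a strict supersolution, so any solution with $\hat K-\de\le x_0\le \hat K+\de$ remains in $[K-\de,K+\de]$ for all $t\ge0$. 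But this has to be said; as written, the statement you prove is weaker than the one claimed.
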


For $L_k$ as in the above statement, $L_k(t,\cdot) :C\to\R$ are normalized, positive   linear operators for all $t\ge 0$. Clearly, eq. \eqref{2.1} is a particular case of \eqref{2.3}, where $\eta_k(t,\th)$ are Heaviside functions, $\eta_k(t,\th)=0$ for $-\tau\le \th \le -\tau_k(t), \eta_k(t,\th)=1$ for $-\tau_k(t)<\th\le 0$.

\med

We now consider a subclass of scalar DDEs \eqref{2.1}, since a significant number of population models have the form \eqref{2.1} (or \eqref{2.3}) with 
$R(y_1,\dots,y_m)= \sum_{k=1}^m y_kr_k(y_k)$, $D(x)=xd(x)$ for $ (y_1,\dots,y_m)\in \R^m_+,x\in \R_+,
$
 with $r_k,d$ strictly positive on $\R_+$. See  Theorem 3.3 in [1] for a particular case, and [5, pp.~146], also for further references. 

\begin{corollary}\label{cor21}
Consider the equation 
\begin{equation}\label{2.4}
\dot x(t)=\sum_{k=1}^m x(t-\tau_k(t))r_k\big (x(t-\tau_k(t))\big )-x(t)d(x(t)),\q t\ge 0,
\end{equation}
where $r_k,d:[0,\infty)\to [0,\infty)$ are locally Lipschitz functions with  $r_k(x)>0, d(x)>0$ for $x>0$, and $\tau_k:[0,\infty)\to [0,\tau]$ are continuous, $k=1,\dots,m$.
Assume also that
\begin{itemize}
\item[(i)]  $xr_k(x)$ are nondecreasing functions on $[0,\infty)$, $k=1,\dots,m$;
\item[(ii)] for $r(x):=\sum_{k=1}^m r_k(x)$, the function $r(x)-d(x)$ is   (strictly) decreasing  on $[0,\infty)$;
\item[(iii)] $r(\infty)-d(\infty)<0.$
\end{itemize}
Then, there are at most two nonnegative equilibria. If zero is the unique equilibrium, then it is GAS; if there is a positive equilibrium $x^*$, then $x^*$ is GAS (in the set of all solutions with initial conditions $x_0=\var\in C_0$).
\end{corollary}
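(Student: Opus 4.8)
The plan is to recognize that \eqref{2.4} is the special case of \eqref{2.1} obtained by setting $\rho(t)\equiv 1$, $R(y_1,\dots,y_m)=\sum_{k=1}^m y_k r_k(y_k)$ and $D(x)=x\,d(x)$, and then to verify hypotheses (A1)--(A3) so that Theorem \ref{thm21} applies directly. First, (A1) holds trivially with $\rho_0=1$. Next, since $R$ is additively separable with $\partial R/\partial y_j=\big(y_j r_j(y_j)\big)'$, assumption (A2) (monotonicity of $R$ in each argument) is exactly the requirement that each $y\mapsto y\,r_j(y)$ be nondecreasing, i.e.\ hypothesis (i). Note that $r_k,d$ are continuous, being locally Lipschitz, so $R$ and $D$ are continuous and smooth enough for the standing uniqueness assumption.

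Set $g(x):=r(x)-d(x)$ with $r=\sum_{k=1}^m r_k$. Evaluating the reaction term along the diagonal $y_1=\cdots=y_m=x$ gives $R(x,\dots,x)-D(x)=x\,r(x)-x\,d(x)=x\,g(x)$, so the nonnegative equilibria of \eqref{2.4} are the constants $c\ge 0$ with $c\,g(c)=0$, namely $c=0$ together with the positive zeros of $g$. By (ii) the function $g$ is continuous and strictly decreasing, hence it has at most one positive zero; this already yields the claim that there are at most two nonnegative equilibria.

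It remains to exhibit the constant $K$ of (A3), which I would do by a dichotomy on the sign of $g(0)$. If $g(0)\le 0$, strict monotonicity forces $g(x)<0$ for all $x>0$, so $0$ is the unique equilibrium; taking $K=0$, condition \eqref{2.2} becomes $x\cdot x\,g(x)<0$ for $x>0$, which holds because $x>0$ and $g(x)<0$. If instead $g(0)>0$, then since $g(\infty)<0$ by (iii) and $g$ is continuous, the intermediate value theorem provides a unique $x^*>0$ with $g(x^*)=0$; taking $K=x^*$, strict monotonicity gives $g>0$ on $(0,x^*)$ and $g<0$ on $(x^*,\infty)$, so that $(x-K)g(x)<0$ for all $x>0$ with $x\ne K$. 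Since $R(x,\dots,x)-D(x)=x\,g(x)$ and $x>0$, the latter inequality is equivalent to \eqref{2.2}, so (A3) holds with this $K$. In either case Theorem \ref{thm21} then yields that the chosen equilibrium $K$ (zero when it is the only equilibrium, the positive $x^*$ otherwise) is GAS in $C_0$.

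The argument is essentially a direct translation of (i)--(iii) into (A1)--(A3), so there is no deep obstacle; the only point requiring care is the verification of (A3). Specifically, one must handle the factor $x$ multiplying $g(x)$, which vanishes at the origin, by restricting to $x>0$ and dividing it out, and must treat the borderline case $g(0)=0$ correctly to confirm that $K=0$ is then genuinely the sole equilibrium. I expect this sign and boundary bookkeeping to be the main, and only, subtlety.
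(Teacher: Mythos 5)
Your proposal is correct and takes essentially the same route as the paper: both cast \eqref{2.4} as the special case of \eqref{2.1} with $\rho\equiv 1$, $R(y_1,\dots,y_m)=\sum_{k=1}^m y_kr_k(y_k)$, $D(x)=xd(x)$, and then verify (A3) by the dichotomy on the sign of $r(0)-d(0)$ (with $K=0$ when $r(0)-d(0)\le 0$, and $K=x^*$ the unique positive zero of $r-d$ otherwise) before invoking Theorem \ref{thm21}. The paper's proof is simply a terser version of your argument, leaving the verification of (A1), (A2) and the factorization $R(x,\dots,x)-D(x)=x\bigl(r(x)-d(x)\bigr)$ implicit.
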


\begin{proof} If $r(0)-d(0)\le 0$,  (A3) holds with $K=0$, hence 0 is GAS; if
$r(0)-d(0)> 0$, then there exists a unique $x^*>0$ such that $r(x^*)-d(x^*)=0$, and (A3) holds with $K=x^*$.\end{proof}

A similar version of  this corollary for equations with distributed delays could also be stated.

\section{ Main results}

Based on Theorem 2.1, we now extend the arguments used in our proof of Theorem \ref{thm1} to a larger class of nonautonomous cooperative models. 

\begin{theorem}\label{thm31} Consider
\begin{equation}\label{3.1}
\dot x(t)=R\big (t,x(t-\tau_1(t)),\dots,x(t-\tau_m(t))\big)-D(t,x(t)),\q t\ge 0,
\end{equation}
where $m\in\N$, $R(t,y), D(t,x),\tau_k(t)$ are continuous with
$0\le \tau_k(t)\le \tau$, for $t,x\in\R_+,y\in\R_+^m,1\le k\le m$, and assume that:
\begin{itemize}

\item[(H1)] there are (locally Lipschitz) continuous functions $R^l,R^u:\R^m_+\to\R_+,D^l,D^u:\R_+\to \R_+$ with $R^l(0,\dots, 0)=R^u(0,\dots, 0)=D^l(0)=D^u(0)=0$,
such that
$$R^l(y)\le R(t,y)\le R^u(y), D^l(x)\le D(t,x)\le D^u(x)\ {\rm for}\ t\ge 0, y\in \R_+^m, x\ge 0,
$$
and the pairs $(R^u,D^l), (R^l,D^u)$ satisfy assumptions (A2)-(A3) with $K>0$, i.e.,
\item[(H2)] $R^l(y_1,\dots,y_m),R^u(y_1,\dots,y_m)$ are nondecreasing functions on each $y_k\in [0,\infty)$, $k=1,\dots,m$;
\item[(H3)]   there exist $K^l,K^u>0$  such that
\begin{equation*}
\begin{split}
&(x-K^l) (R^l(x,\dots,x)-D^u(x) )<0\q {\rm for}\q x>0,x\ne K^l,\\
&(x-K^u) (R^u(x,\dots,x)-D^l(x) )<0\q {\rm for}\q x>0,x\ne K^u.
\end{split}
\end{equation*}
\end{itemize}
Then  \eqref{3.1} is permanent (in $C_0$); to be more precise,  all positive solutions of \eqref{3.1} satisfy
\begin{equation}\label{3.2}
K^l\le \liminf_{t\to\infty} x(t)\le \limsup_{t\to\infty} x(t)\le K^u.
\end{equation}
\end{theorem}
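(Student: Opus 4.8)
The plan is to adapt the sandwich argument from the alternative proof of Theorem \ref{thm1}, replacing the explicit quadratic bounds there by the general bounds $R^l,R^u,D^l,D^u$ supplied by (H1), and replacing the appeal to \cite{bb12} by Theorem \ref{thm21}. Fix $\varphi\in C_0$ and let $x(t)=x(t;\varphi)$ be the corresponding solution of \eqref{3.1}, defined on a maximal interval $[0,a)$; as observed before the statement, $x(t)>0$ on $[0,a)$.

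First I would establish the upper estimate. Using $R(t,y)\le R^u(y)$ and $D(t,x)\ge D^l(x)$ from (H1), the solution satisfies the differential inequality
$$\dot x(t)\le R^u\big(x(t-\tau_1(t)),\dots,x(t-\tau_m(t))\big)-D^l(x(t)).$$
The functional on the right is quasimonotone, since $R^u$ is nondecreasing in each argument by (H2); hence the auxiliary equation
$$\dot u(t)=R^u\big(u(t-\tau_1(t)),\dots,u(t-\tau_m(t))\big)-D^l(u(t))$$
is cooperative and has exactly the form \eqref{2.1} with $\rho\equiv 1$, $R=R^u$, $D=D^l$. Its hypotheses (A1)--(A3) hold, with $K=K^u$, using (H2) and the second line of (H3); so Theorem \ref{thm21} gives that $K^u$ is GAS for this equation among solutions starting in $C_0$. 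Taking $u_0=\varphi\in C_0$ and invoking the comparison principle for quasimonotone delay equations (Theorem 5.1.1 of \cite{smith}) yields $x(t)\le u(t)$ as long as both are defined, with $u(t)\to K^u$. In particular $x$ is bounded, so $a=\infty$, and $\limsup_{t\to\infty}x(t)\le K^u$.

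Then I would obtain the lower estimate symmetrically. From $R(t,y)\ge R^l(y)$ and $D(t,x)\le D^u(x)$ one gets
$$\dot x(t)\ge R^l\big(x(t-\tau_1(t)),\dots,x(t-\tau_m(t))\big)-D^u(x(t)),$$
and comparison with the cooperative equation $\dot v(t)=R^l(v(t-\tau_1(t)),\dots,v(t-\tau_m(t)))-D^u(v(t))$, again of the form \eqref{2.1} and now satisfying (A1)--(A3) with $K=K^l$ by the first line of (H3), gives $x(t)\ge v(t)$ for $v_0=\varphi$. Since Theorem \ref{thm21} provides $v(t)\to K^l$, we conclude $\liminf_{t\to\infty}x(t)\ge K^l$. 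Combining the two bounds yields \eqref{3.2}, and as $K^l>0$ and $K^u$ are independent of $\varphi$, permanence in $C_0$ follows at once.

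The step requiring the most care is verifying that the two auxiliary equations genuinely fall within the scope of Theorem \ref{thm21} — that is, checking (A1)--(A3) for each pair $(R^u,D^l)$ and $(R^l,D^u)$ — together with a clean application of the comparison theorem to the two differential inequalities rather than to equations. In particular, one must make sure the quasimonotone (cooperative) structure is invoked on the dominating equation, so that the ordering $v(t)\le x(t)\le u(t)$ is legitimate and preserved for all $t\ge 0$. Everything deeper, namely the global attractivity of the comparison equilibria $K^l$ and $K^u$, is already absorbed into Theorem \ref{thm21}, so no genuinely new difficulty remains.
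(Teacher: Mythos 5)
Your proposal is correct and follows essentially the same route as the paper: sandwich the solution between the two cooperative auxiliary equations $\dot u = R^u - D^l$ and $\dot v = R^l - D^u$, apply Smith's Theorem 5.1.1 to get $v(t)\le x(t)\le u(t)$, and invoke Theorem \ref{thm21} (with (A1)--(A3) verified via (H2)--(H3)) to conclude $u(t)\to K^u$, $v(t)\to K^l$, hence \eqref{3.2}. Your added care in checking (A1)--(A3) explicitly and in noting that the quasimonotone structure sits on the comparison equations merely spells out what the paper leaves implicit.
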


 \begin{proof}  As for the first part of the proof of Theorem \ref{thm1},   we compare the solutions $x(t)=x(t;\var)\ (\var\in C_0$) of  equation  \eqref{3.1} with the solutions $u(t)=u(t;\var)$ and  $v(t)=v(t;\var)$ of the cooperative equations
  \begin{equation*}
\begin{split}
\dot u(t)&= R^u\big (u(t-\tau_1(t)),\dots,u(t-\tau_m(t))\big )-D^l(u(t)),\\
\dot v(t)&=R^l\big (v(t-\tau_1(t)),\dots,v(t-\tau_m(t))\big )-D^u(v(t)),\q t\ge 0,
\end{split}
\end{equation*}
respectively.
By Theorem 5.1.1 in  \cite{smith} we deduce that $v(t)\le x(t)\le u(t)$ for $t\ge 0$, whereas Theorem 2.1 applied to these equations  implies that $v(t)\to K^l, u(t)\to K^u$ as $t\to\infty$. This implies \eqref{3.2}.\end{proof}

\begin{remark}  {\rm Of course, if in (H1)--(H3)  only  the conditions regarding $R^l$ and $D^u$ are assumed (and with no constraints on upper bounds for $R(t,y)$ and lower bounds for $D(t,x)$), instead of the permanence, only  the {uniform persistence} for \eqref{3.1}  is derived. Similarly, if   only  the conditions regarding $R^u$ and $D^l$ are assumed,  instead of the permanence,  one  simply gets that  \eqref{3.1} is dissipative. On the other hand, it is apparent that  more general models with distributed delays can be considered, in which case Theorem \ref{thm22} should be used for results of comparison with auxiliary cooperative systems.}
\end{remark}

The same technique and Corollary \ref{cor21} lead to the corollary below.

\begin{corollary}\label{cor31} Let $m\in\N$, $r_k(t,y), d(t,x),\tau_k(t)$ be continuous with
$0\le \tau_k(t)\le \tau$, for $t,x\ge 0,1\le k\le m$, and assume that:
\vskip 1mm
\item{(i)} there are (locally Lipschitz) continuous functions $r_k^l,r_k^u:\R_+\to\R_+,d^l,d^u:\R_+\to \R_+$ 
such that
$$r_k^l(x)\le r_k(t,x)\le r_k^u(x), \ d^l(x)\le d(t,x)\le d^u(x)\q {\rm for}\q t\ge 0, x\ge 0;
$$
\item{(ii)} $xr_k^l(x)$ and $xr_k^u(x)$ are nondecreasing functions on $[0,\infty)$, $1\le k\le m$; 
\item{(iii)} the functions $r^u(x)-d^l(x)$ and $r^l(x)-d^u(x)$, where $ r^u(x)=\sum_{k=1}^m r_kž(x),
r^l(x)=\sum_{k=1}^mr_k^l(x)$, are (strictly) decreasing on $[0,\infty)$;
\item{(iv)} $r^l(0)-d^u(0)>0$ and $\lim_{t\to\infty} (r^u(x)-d^l(x))<0.$
\vskip 1mm
Then,  the equation
\begin{equation*}
\dot x(t)=\sum_{k=1}^m x(t-\tau_k(t))r_k\big (t,x(t-\tau_k(t))\big )-x(t)d\big (t,x(t)\big ),\q t\ge 0,
\end{equation*}
 is permanent in $C_0$. Moreover,  \eqref{3.2} holds with $K^u,K^l$ the  positive solutions of the equations $r^u(x)-d^l(x)=0, r^l(x)-d^u(x)=0$, respectively.
\end{corollary}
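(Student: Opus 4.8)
The plan is to recognize the equation as the special case of \eqref{3.1} in which $R(t,y_1,\dots,y_m)=\sum_{k=1}^m y_k\,r_k(t,y_k)$ and $D(t,x)=x\,d(t,x)$, and then to invoke Theorem \ref{thm31} after building the comparison functions demanded by (H1)--(H3). First I would set
\[
R^l(y)=\sum_{k=1}^m y_k\,r_k^l(y_k),\quad R^u(y)=\sum_{k=1}^m y_k\,r_k^u(y_k),\quad D^l(x)=x\,d^l(x),\quad D^u(x)=x\,d^u(x),
\]
each of which vanishes at the origin because of the common factor $y_k$ (resp.\ $x$). Multiplying the inequalities in (i) by the nonnegative quantities $y_k$ (resp.\ $x$) gives at once $R^l(y)\le R(t,y)\le R^u(y)$ and $D^l(x)\le D(t,x)\le D^u(x)$ for $t\ge 0$, so (H1) holds; and since the locally Lipschitz hypothesis on $r_k^l,r_k^u,d^l,d^u$ is inherited by these products and sums, the regularity requirements of Theorem \ref{thm31} are met as well.

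Verifying (H2) is routine: each summand $y_k\,r_k^u(y_k)$ of $R^u$ depends only on $y_k$ and is nondecreasing by (ii), so $R^u$ is nondecreasing in every argument, and likewise for $R^l$.

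The substantive step is (H3), and this is where Corollary \ref{cor21} enters. Evaluating along the diagonal,
\[
R^u(x,\dots,x)-D^l(x)=x\big(r^u(x)-d^l(x)\big),\qquad R^l(x,\dots,x)-D^u(x)=x\big(r^l(x)-d^u(x)\big).
\]
By (iii) both $r^u-d^l$ and $r^l-d^u$ are continuous and strictly decreasing. From the ordering $r^l\le r^u$, $d^l\le d^u$ of the bounds, (iv) gives $r^u(0)-d^l(0)\ge r^l(0)-d^u(0)>0$ and $\lim_{x\to\infty}\big(r^l(x)-d^u(x)\big)\le\lim_{x\to\infty}\big(r^u(x)-d^l(x)\big)<0$. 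A continuous, strictly decreasing function that is positive at $0$ and negative at infinity has a unique positive zero; call these $K^u$ and $K^l$ for the two pairs. Then for $x>0$, $x\ne K^u$, the factor $x$ is positive while $r^u(x)-d^l(x)$ carries the sign of $K^u-x$, so $(x-K^u)\big(R^u(x,\dots,x)-D^l(x)\big)<0$; the analogous computation yields the inequality for $K^l$. This is precisely (H3).

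With (H1)--(H3) in hand, Theorem \ref{thm31} delivers permanence in $C_0$ together with the bounds \eqref{3.2}, and by construction $K^u,K^l$ are exactly the positive roots of $r^u(x)-d^l(x)=0$ and $r^l(x)-d^u(x)=0$. Equivalently, one could bypass Theorem \ref{thm31} and rerun the comparison argument directly, applying Corollary \ref{cor21} to the two auxiliary autonomous equations (which now carry the required product structure) in place of Theorem \ref{thm21}. I do not expect a serious obstacle; the only point needing care is extracting the correct sign of the limit at infinity for \emph{both} comparison pairs from the single hypothesis (iv), which the monotone ordering $r^l\le r^u$, $d^l\le d^u$ settles.
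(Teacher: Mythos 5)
Your proposal is correct and follows essentially the route the paper intends: the paper gives no separate proof, saying only that ``the same technique and Corollary~\ref{cor21} lead to the corollary,'' i.e.\ comparison with the two autonomous auxiliary equations whose positive equilibria are the roots of $r^u-d^l$ and $r^l-d^u$. Your verification of (H1)--(H3) (in particular deducing $r^u(0)-d^l(0)>0$ and $\lim_{x\to\infty}(r^l(x)-d^u(x))<0$ from the ordering of the bounds, then extracting the unique positive zeros) is exactly the content of that reduction, so invoking Theorem~\ref{thm31} directly is the same argument packaged through its hypotheses.
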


\smallskip

We finally study the permanence of \eqref{1.8}, with less constraints than the ones proposed in \cite{bb14}.

\begin{theorem} \label{thm32} Consider  the equation 
\begin{equation}\label{3.3}
\dot x(t)=\sum_{k=1}^m {{\al_k(t)x(t-\tau_k(t))}\over {1+\be_k(t)x(t-\tau_k(t))}}
-\mu(t)x(t)-\k(t)x^2(t),\q t\ge 0,
\end{equation}
where 
$\al_k,\k:[0,\infty)\to (0,\infty)$ are continuous, bounded below and above by positive constants, and $\mu,\be_k,\tau_k:[0,\infty)\to [0,\infty)$ are continuous and bounded, for $1\le k\le m$. If
\begin{equation}\label{3.4}
\sum_{k=1}^m \inf_{t\ge 0}\al_k(t)>\sup_{t\ge 0}\mu(t),
\end{equation}
then all solutions $x(t)=x(t;\var)\ (\var\in C_0)$ of \eqref{3.3} satisfy the uniform estimates
\begin{equation}\label{3.5}
m_0\le \liminf_{t\to\infty} x(t)\le \limsup_{t\to\infty} x(t)\le M_0,
\end{equation}
where 
\begin{equation}\label{3.6}
M_0=\limsup_{t\to\infty}{1\over {\k(t)}} \left (\sum_{k=1}^m\al_k(t)-\mu(t)\right),
\end{equation}
 and
\begin{equation}\label{3.7}
m_0={c_0\over c_1}\q {\rm for}\q  c_0=\sum_{k=1}^m \inf_{t\ge 0}\al_k(t)-\sup_{t\ge 0}\mu(t),\ c_1=\sup_{t\ge 0}\k(t)+\sum_{k=1}^m \inf_{t\ge 0}\al_k(t)\, \sup_{t\ge 0}\be_k(t),
\end{equation}
For the particular case of \eqref{3.3} with $\be_k\equiv 0, 1\le k\le m$, i.e.,
\begin{equation}\label{3.8}
\dot x(t)=\sum_{k=1}^m \al_k(t)x(t-\tau_k(t))
-\mu(t)x(t)-\k(t)x^2(t),\q t\ge 0,
\end{equation}
where $\al_k,\mu,\k$ and $\tau_k$ are as above,
the lower bound in \eqref{3.5} can be taken as
\begin{equation}\label{3.9}
m_0=\liminf_{t\to\infty}{1\over {\k(t)}} \left (\sum_{k=1}^m\al_k(t)-\mu(t)\right).
\end{equation}
\end{theorem}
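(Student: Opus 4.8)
The strategy is to combine the comparison technique behind Theorem~\ref{thm31} and Corollary~\ref{cor31} with a direct use of the fluctuation lemma, exactly as in the alternative proof of Theorem~\ref{thm1}. In a first stage I would invoke the auxiliary autonomous cooperative equations to show that \eqref{3.3} is permanent, so that every positive solution satisfies $0<\ul{x}\le \bar{x}<\infty$, where $\ul{x}=\liminf_{t\to\infty}x(t)$ and $\bar{x}=\limsup_{t\to\infty}x(t)$; in a second stage I would sharpen these crude comparison bounds into the explicit constants $M_0$ and $m_0$ by evaluating \eqref{3.3} along fluctuation sequences. Throughout, write $g^-=\inf_{t\ge0}g(t)$ and $g^+=\sup_{t\ge0}g(t)$ for each coefficient $g$.

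For the first stage, I would cast \eqref{3.3} in the form of Corollary~\ref{cor31} with $r_k(t,x)=\al_k(t)/(1+\be_k(t)x)$ and $d(t,x)=\mu(t)+\k(t)x$, and choose the envelopes $r_k^l(x)=\al_k^-/(1+\be_k^+x)$, $r_k^u(x)=\al_k^+/(1+\be_k^-x)$, $d^l(x)=\mu^-+\k^-x$, $d^u(x)=\mu^++\k^+x$. Then $xr_k^l(x)$ and $xr_k^u(x)$ are increasing Holling-type terms, so (ii) holds; the functions $r^u(x)-d^l(x)$ and $r^l(x)-d^u(x)$ are each a nonincreasing term minus a strictly increasing affine term of positive slope $\k^->0$ (resp. $\k^+>0$), so (iii) holds; and (iv) reduces to $r^l(0)-d^u(0)=\sum_k\al_k^--\mu^+>0$, which is precisely \eqref{3.4}, together with $r^u(x)-d^l(x)\to-\infty$ as $x\to\infty$. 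Hence Corollary~\ref{cor31} applies and delivers permanence, with finite positive $K^l,K^u$ satisfying $K^l\le \ul{x}\le\bar{x}\le K^u$.

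In the second stage I would mimic the fluctuation argument in the proof of Theorem~\ref{thm1}. For the upper bound, take $t_n\to\infty$ with $x(t_n)\to\bar{x}$ and $\dot x(t_n)\to0$; since $\be_k(t)\ge0$ gives $\al_k(t)y/(1+\be_k(t)y)\le\al_k(t)y$, substituting $x(t_n-\tau_k(t_n))\le\bar{x}+\vare$, $x(t_n)\ge\bar{x}-\vare$ into \eqref{3.3} and letting $n\to\infty$, $\vare\to0^+$ yields $\bar{x}\le M_0$ with $M_0$ as in \eqref{3.6}. For the lower bound in the general case, take $s_n\to\infty$ with $x(s_n)\to\ul{x}$, $\dot x(s_n)\to0$; using monotonicity of $y\mapsto\al_k^-y/(1+\be_k^+y)$ to replace the delayed arguments by $\ul{x}$, passing to the limit, and then applying the tangent-line inequality $1/(1+z)\ge1-z$ with $z=\be_k^+\ul{x}$ gives $0\ge(\sum_k\al_k^-)\ul{x}-(\sum_k\al_k^-\be_k^+)\ul{x}^2-\mu^+\ul{x}-\k^+\ul{x}^2$, whence dividing by $\ul{x}>0$ produces $c_1\ul{x}\ge c_0$, i.e. $\ul{x}\ge m_0=c_0/c_1$ with $c_0,c_1$ as in \eqref{3.7}. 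In the special case \eqref{3.8} the Holling terms are absent, so I would instead retain the time dependence and argue symmetrically to the $M_0$ computation, obtaining the sharper $m_0$ of \eqref{3.9}.

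The routine parts are the verification of the hypotheses of Corollary~\ref{cor31} and the $\limsup$ estimate, which parallel the already-proven Theorem~\ref{thm1}. The delicate point is the explicit lower bound $c_0/c_1$: one must run the comparison first to guarantee $\ul{x}>0$, both so that the fluctuation lemma is applicable and so that the final division by $\ul{x}$ is legitimate; and one must introduce $1/(1+z)\ge1-z$ \emph{only after} passing to the limit along $s_n$, since applying it to the delayed values $x(s_n-\tau_k(s_n))$ would produce the squares of those delayed values and force $\bar{x}$ (hence $M_0$) into the lower bound, spoiling the clean closed form of $m_0$.
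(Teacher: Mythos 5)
Your proposal is correct, and its first stage (verifying the hypotheses of Corollary~\ref{cor31} with the envelopes $r_k^l,r_k^u,d^l,d^u$) and the $\limsup$ estimate via the fluctuation lemma coincide with the paper's proof, as does your treatment of the special case \eqref{3.8}. Where you genuinely diverge is the explicit lower bound $m_0=c_0/c_1$ for the general equation \eqref{3.3}: the paper never runs a second fluctuation argument there. Instead it observes that $K^l$ is the unique positive root of $r^l(x)-d^u(x)=0$, that $r^l(0)-d^u(0)=c_0$, and that $|(r^l-d^u)'(x)|\le \sum_k \underline{\alpha}_k\overline{\beta}_k+\overline{\kappa}=c_1$, so the Mean Value Theorem gives $K^l\ge c_0/c_1$ directly, and the bound on $\liminf x(t)$ follows from the already-established inequality $\underline{x}\ge K^l$. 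Your route — fluctuation sequence at the $\liminf$, monotonicity of $y\mapsto \alpha_k^- y/(1+\beta_k^+ y)$ to pass to the limit, and only then the tangent-line inequality $1/(1+z)\ge 1-z$ — arrives at the same constant, and your caution about the order of operations is well taken: linearizing before the limit would involve the non-monotone quadratic $y(1-\beta_k^+ y)$ evaluated at delayed values, dragging $\overline{x}$ into the estimate. The trade-off: the paper's MVT argument is shorter, needs no second limit passage, and actually bounds the comparison equilibrium itself (so it yields $\underline{x}\ge K^l$, which may be strictly better than $c_0/c_1$); your argument is more uniform with the rest of the proof, staying entirely within the fluctuation framework, and both versions equally require the permanence step first — in the paper's case to have $\underline{x}\ge K^l$, in yours to guarantee boundedness and $\underline{x}>0$ so the final division is legitimate.
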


\begin{proof}    In what follows,   we set $0\le \tau_k(t)\le \tau$ for $t\ge 0,k=1,\dots,m$, and  use the notations
$$\underline f=\inf_{t\ge 0}f(t),\q \ol f=\sup_{t\ge 0}f(t),$$
for $f$ replaced by $\al_k,\be_k,\mu$ or $\k$. By 
 assumption, the functions $\al_k,\k$  are bounded and bounded away from zero, and $\be_k,\mu$ are positive and bounded.  Note that the cases $\ul\mu=0$ or $\ul{\be_k}=0$, for all or some $k$'s, are included in our setting.
For $R_k(t,x):={{\al_k(t)x}\over {1+\be_k(t)x}}$, we have  $\frac{\p }{\p x}R_k(t,x)>0$ for $t,x\ge 0, 1\le k\le m$; in particular \eqref{3.3} satisfies the quasimonotone condition.
 Next, denote  $$r^u(x)=\sum_{k=1}^mr_k^u(x),\ r^l(x)=\sum_{k=1}^mr_k^l(x),\q D^u(x)=xd^u(x),\ D^l(x)=xd^l(x)$$ where
$${r_k^u}(x)={ {\overline\al_k}\over {1+\underline{\be_k}x}},\q
{r_k^l}(x)={ {\ul\al_k}\over {1+\ol{\be_k}x}}\q {\rm for}\q x\ge 0,k=1,\dots,m,$$
and 
$d^u(x)=\ol \mu+\ol k x,\ d^l(x)=\ul \mu+\ul k x$ for $x\ge 0.$

The functions  $xr_k^u(x),xr_k^l(x)$ are increasing  and $r^u(x)-d^l (x),r^l(x)-d^u (x)$ are decreasing on $\R_+$.
On the other hand, $r^u(\infty)-d^l(\infty)=-\infty$ and \eqref{3.4} implies that $r^l(0)-d^u(0)>0$. By Corollary \ref{cor31}, \eqref{3.3} is permanent; furthermore, solutions $x(t)$ of  \eqref{3.3} with initial conditions in $C_0$ satisfy $K^l\le \ul x\le \ol x\le K^u$, for
$ \ul x:=\liminf_{t\to\infty} x(t), \ol x:=\limsup_{t\to\infty} x(t)$, where $K^l,K^u$ are the  globally attractive positive equilibria of
\begin{equation*}
\begin{split}
&\dot u(t)= \sum_{k=1}^m u(t-\tau_k(t)) r_k^u\big (u(t-\tau_k(t))\big )-u(t)d^l(u(t)),\\
&\dot v(t)= \sum_{k=1}^m v(t-\tau_k(t)) r_k^l\big (v(t-\tau_k(t))\big )-v(t)d^u(v(t)),
\end{split}
\end{equation*}
respectively.

To prove the uniform upper bound in \eqref{3.5}-\eqref{3.6},  we  reason along the lines of the proof of Theorem \ref{thm1}, so some details are omitted. Take a sequence $(t_n)$ with $t_n\to\infty$,
$\dot x(t_n)\to 0$ and $x(t_n)\to \ol x$. For any $\vare>0$ small and $n$ large, we derive
$$\dot x(t_n)\le k(t_n)\left [{1\over {k(t_n)}}\left (\sum_{k=1}^m R_k\big (t_n,\ol x+\vare\big )-\mu (t_n)x(t_n)\right )-x^2(t_n)\right ].
$$
Taking limits $n\to\infty,\vare \to 0^+$, this inequality yields
$$\ol x\le \limsup_{t\to\infty}{1\over {k(t)}} \left (\sum_{k=1}^m{{\al_k(t)}\over{1+\be_k(t)\ol x}}-\mu(t)\right)
\le M_0.$$

For the lower bound $m_0$ in \eqref{3.7}, we note that    $K^l$ is the unique $x>0$ such that $r^l(x)-d^u(x)=0$.
 Since $r^l(0)-d^u(0)=c_0$ and 
$|(r^l-d^u)'(x)|\le \sum_{k=1}^m \ul \al_k\ol \be_k+\ol\k=c_1$,
by the Mean Value theorem we get
$K^l\ge c_0/c_1$. 
In the case of \eqref{3.8}, we now take a sequence $(s_n)$ with $s_n\to\infty$,
$\dot x(s_n)\to 0$ and $x(s_n)\to \ul x$. For any $\vare>0$ small and $n$ large, we derive
$$\dot x(s_n)\ge k(t_n)\left [{1\over {k(s_n)}}\left ((\ul x-\vare)\sum_{k=1}^m \al_k (s_n)-\mu (s_n)x(s_n)\right )-x^2(s_n)\right ].
$$
Taking limits $n\to\infty,\vare \to 0^+$, this leads to $\ul x\ge m_0$ for $m_0$ as in \eqref{3.9}.
\end{proof}

\smallskip

\begin{remark} {\rm For eq.   \eqref{3.8} with $\mu\equiv 0$, we obtain  \eqref{1.1}. Thus, Theorem \ref{thm1} is simply a corollary of Theorems \ref{thm32}, the latter one under slightly weaker hypotheses.}
\end{remark}

Clearly, the method presented in this note applies to other scalar nonautonomous delayed population models, as illustrated in the next example.

Consider the scalar DDE
\begin{equation}\label{3.12}
\dot x(t)=-d(t)x(t)+\sum_{k=1}^m\be_k(t) x(t-\tau_k(t))e^{-x(t-\tau_k(t))}\, ,
\end{equation}
where $\be_k,\tau_k,d$ are continuous, bounded and nonnegative on $[0,\infty)$.
Eq. \eqref{3.12} is a generalization of the well-known Nicholson's equation $\dot x(t)=-dx(t)+\be x(t-\tau)e^{-x(t-\tau)} \ (d,\be,\tau>0)$. The autonomous version of \eqref{3.12} reads as
\begin{equation}\label{3.13}
\dot x(t)=-dx(t)+\sum_{k=1}^m\be_k x(t-\tau_k)e^{-x(t-\tau_k)}.
\end{equation}
With $d>0, \be_k\ge 0$ and $\be:=\sum_{k=1}^m\be_k>0$,
Liz et al. \cite{liz} proved that if $1< \be /d\le e^2$, then the positive equilibrium $x^*:=\log (\be /d)$ of \eqref{3.13} is  a global attractor of all  positive solutions. Although \eqref{3.13} is not cooperative, if $1< \be /d\le e$ then all solutions satisfy   $\lim_{t\to\infty} x(t)=x^*\le 1$, so  \eqref{3.13} has a cooperative large-time  behavior, since $h(x):=xe^{-x}$ is increasing on $[0,1]$. 
Under some  further constraints on the coefficients $\be_k(t),d(t)$, we shall take advantage of this monotonicity to study the permanence of \eqref{3.12}.

\begin{theorem} \label{thm33}
For \eqref{3.12}, suppose that 
\begin{equation}\label{3.14}
\sup_{t\ge 0} d(t)< \sum_{k=1}^m \inf_{t\ge 0} \be_k(t),\q  \sum_{k=1}^m \sup_{t\ge 0} \be_k(t)<e\, \inf_{t\ge 0} d(t).
\end{equation}
Then, all positive solutions $x(t)$ of  \eqref{3.12} satisfy
\begin{equation}\label{3.15}
\liminf_{t\to\infty} \log \left ({1\over {d(t)}}\sum_{k=1}^m\be_k(t)\right)\le \liminf_{t\to\infty} x(t),\ \limsup_{t\to\infty} x(t)\le  \limsup_{t\to\infty} \log \left ({1\over {d(t)}}\sum_{k=1}^m\be_k(t)\right).
\end{equation}
\end{theorem}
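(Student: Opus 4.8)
The plan is to mimic the structure of the proof of Theorem~\ref{thm32}, combining a comparison argument with an auxiliary cooperative equation and a fluctuation-lemma refinement. The key observation, already flagged in the discussion preceding the statement, is that $h(x)=xe^{-x}$ is increasing on $[0,1]$, so once solutions are confined to the region where the relevant arguments stay below $1$, equation~\eqref{3.12} behaves like a cooperative equation and the monotone comparison machinery of Theorem~\ref{thm21} and Theorem~5.1.1 of~\cite{smith} becomes available. The second inequality in~\eqref{3.14}, namely $\sum_k \ol{\be_k}<e\,\ul d$, is precisely what forces the limiting upper bound for $x(t)$ to lie below $1$, keeping us inside the increasing branch of $h$.

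First I would set up the upper and lower comparison equations with constant coefficients. Writing $\ul d,\ol d,\ul{\be_k},\ol{\be_k}$ for the infima and suprema of the coefficients, I would introduce
\begin{equation*}
\dot u(t)=-\ul d\, u(t)+\sum_{k=1}^m \ol{\be_k}\, h\big(u(t-\tau_k(t))\big),\q
\dot v(t)=-\ol d\, v(t)+\sum_{k=1}^m \ul{\be_k}\, h\big(v(t-\tau_k(t))\big),
\end{equation*}
and verify that, on the invariant region where solutions are eventually trapped below $1$, these are cooperative and satisfy the hypotheses (A1)--(A3) of Theorem~\ref{thm21}. The two conditions in~\eqref{3.14} guarantee respectively that $\ul{\be}/\ol d>1$ (so a positive equilibrium exists) and that $\ol{\be}/\ul d\le e$ (so the globally attractive equilibrium $x^*=\log(\be/d)$ of the autonomous comparison equations does not exceed $1$, placing everything on the increasing portion of $h$). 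Invoking Theorem~\ref{thm21}, the solutions $u,v$ converge to their respective positive equilibria $\log(\sum_k\ol{\be_k}/\ul d)$ and $\log(\sum_k\ul{\be_k}/\ol d)$, and by the comparison theorem $v(t)\le x(t)\le u(t)$ eventually.

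Having secured constant-coefficient bounds, I would then sharpen them to the full nonautonomous estimates~\eqref{3.15} exactly as in Theorem~\ref{thm32}: apply the fluctuation lemma to choose sequences $t_n\to\infty$ with $\dot x(t_n)\to 0$ and $x(t_n)\to\ol x$ (and analogously $s_n$ for $\ul x$), substitute into~\eqref{3.12}, and let $\vare\to 0^+$ after an approximation of the delayed terms. Using the monotonicity of $h$ on $[0,1]$ to bound $h(x(t_n-\tau_k(t_n)))$ by $h(\ol x+\vare)$ from above and $h(\ul x-\vare)$ from below, the resulting inequalities at the stationary points yield $d(t_n)\,\ol x\le \sum_k \be_k(t_n)\,h(\ol x)$ in the limit, and symmetrically for $\ul x$, which after taking $\limsup$/$\liminf$ in $t$ produce~\eqref{3.15}.

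The main obstacle is the delicate point that the fluctuation-lemma argument, and the monotonicity of $h$, are only valid once we know $\ol x\le 1$ (more precisely that the relevant delayed values eventually lie in $[0,1]$). This is not immediate and must be extracted from the comparison step: one needs the eventual bound $\limsup_t x(t)\le \log(\ol{\be}/\ul d)\le 1$ supplied by the auxiliary equation $u$ before the nonlinear refinement can legitimately exploit the increasing branch. I would therefore carry out the comparison/confinement argument first and only then run the fluctuation lemma, taking care that the substitution of $h(\ol x+\vare)$ and $h(\ul x-\vare)$ respects monotonicity, i.e.\ that $\ol x+\vare\le 1$ for small $\vare$, which the strict inequality $\ol{\be}<e\,\ul d$ ensures with room to spare.
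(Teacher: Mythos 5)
Your overall two-step plan (confine solutions below $1$ first, then exploit the increasing branch of $h$ via comparison and the fluctuation lemma) matches the paper's strategy, and your fluctuation-lemma refinement is exactly the paper's. But your confinement step, as you set it up, is circular, and this is a genuine gap. You propose to obtain $\limsup_{t\to\infty}x(t)\le \log\big(\sum_k\ol{\be_k}/\ul d\big)<1$ by comparing $x$ with the solution $u$ of $\dot u(t)=-\ul d\,u(t)+\sum_k\ol{\be_k}\,h\big(u(t-\tau_k(t))\big)$. However, Smith's comparison theorem (Theorem 5.1.1 of \cite{smith}) requires the quasimonotone condition for the equation being used as a comparator, and this equation is \emph{not} quasimonotone on $C^+$: $h(x)=xe^{-x}$ is decreasing for $x>1$, and nothing restricts $u$, $x$, or the initial data to $[0,1]$ a priori. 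Likewise, Theorem~\ref{thm21} cannot be invoked for this equation, since (A2) fails on $[0,\infty)$; and the global attractivity result of \cite{liz} that you implicitly lean on concerns the autonomous equation with constant delays, not time-varying $\tau_k(t)$. So the bound that you say ``must be extracted from the comparison step'' is produced by machinery that only becomes legitimate after that bound is known.

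The paper breaks this circle with a truncation you are missing: define $H(x)=h(x)$ for $0\le x\le 1$ and $H(x)=e^{-1}$ for $x>1$. Then $h\le H$ on $\R_+$ and $H$ is nondecreasing, so the upper comparison equation $\dot u(t)=-\ul d\,u(t)+\sum_k\ol{\be_k}\,H\big(u(t-\tau_k(t))\big)$ is cooperative on all of $C^+$, and the comparison theorem together with Corollary~\ref{cor31}/Theorem~\ref{thm21} applies with no a priori confinement. Moreover its unique positive equilibrium solves $\ul d\,x=\sum_k\ol{\be_k}H(x)$, and since $\sum_k\ol{\be_k}<e\,\ul d$ this equilibrium equals $\log\big(\sum_k\ol{\be_k}/\ul d\big)<1$, i.e.\ it lies in the region where $H=h$, so nothing is lost by truncating. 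This yields the unconditional bound $\limsup_{t\to\infty} x(t)\le M_0<1$; only after that do the eventual confinement, the lower comparison (where one can again use $H$, because $h=H$ on the range the solution eventually occupies --- note that $h$ itself admits no positive nondecreasing minorant on $[0,\infty)$, so the lower bound also needs confinement first), and your fluctuation argument become available. An alternative, equally simple repair of your draft: since $h\le e^{-1}$ everywhere, every solution satisfies $\dot x(t)\le -\ul d\,x(t)+e^{-1}\sum_k\ol{\be_k}$, and an ordinary (undelayed) differential inequality already gives $\limsup_{t\to\infty} x(t)\le \sum_k\ol{\be_k}/(e\,\ul d)<1$ without any monotonicity. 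Without some such device, the proof does not get off the ground.
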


\begin{proof} With the notation
$\ul\be_k=\inf_{t\ge 0} \be_k(t),\ol\be_k=\sup_{t\ge 0} \be_k(t), \ul d=\inf_{t\ge 0} d(t), \ol d=\sup_{t\ge 0} d(t)$,  conditions \eqref{3.14} are written as
$\ol d< \sum_{k=1}^m \ul\be_k\le \sum_{k=1}^m \ol\be_k<e\ul d.$ Solutions of \eqref{3.12} satisfy
$$ -\ol d x(t)+\sum_{k=1}^m\ul \be_k h(x(t-\tau_k(t)))\le \dot x(t)\le -\ul d x(t)+\sum_{k=1}^m\ol \be_k h(x(t-\tau_k(t))),$$
where $h(x)=xe^{-x}$ for $x\ge 0$. Note also that $h(x)\le H(x)$, where $H$ is defined by  $H(x)=h(x)$ for $0\le x\le 1, H(x)=e^{-1}$ for $x>1$., and that the equation $\dot u(t)=-\ul d x(t)+\sum_{k=1}^m\ol \be_k H(x(t-\tau_k(t)))$ is cooperative.
From Corollary \ref{cor31}, \eqref{3.12} is permanent, and (3.2) holds with $m_0 =\log \big [(\sum_{k=1}^m \ul\be_k)/\ol d\big ]>0$ and $M_0=\log \big [(\sum_{k=1}^m \ol\be_k)/\ul d\big ]<1$.

Next, the better estimates in \eqref{3.15} are derived by using the technique in the proofs of Theorems \ref{thm1} and \ref{thm32}. For any positive solution of \eqref{3.12}, set $\ol x=\limsup_{t\to\infty} x(t), \ul x=\liminf_{t\to\infty} x(t)$. Take a sequence $t_n\to\infty$ with $\dot x(t_n)\to 0, x(t_n)\to \ol x$. Fix $\vare>0$. For $n$ large,
$$\dot x(t_n)\le -d(t_n)x(t_n)+h(\bar x+\vare)\sum_{k=1}^m\be_k(t)
= d(t_n)\bar x\Big [-1+{{e^{-\bar x}}\over {d(t_n)}}\sum_{k=1}^m\be_k(t_n) \Big ]+O(\vare).$$
Taking $n\to\infty$ and $\vare\to 0$, this yields
$\ol x\le \limsup_{t\to\infty} \log \left ({1\over {d(t)}}\sum_{k=1}^m\be_k(t)\right).$
A similar argument leads to
$ \ul x\ge \liminf_{t\to\infty} \log \left ({1\over {d(t)}}\sum_{k=1}^m\be_k(t)\right).$
\end{proof}

This example also shows an obvious limitation of  our method: since it relies on comparative results with cooperative equations,  it cannot be invoked to deal with \eqref{3.12} in the case of the upper bound in \eqref{3.14} given by $\sum_{k=1}^m \ol\be_k<e^2\ul d.$ On the contrary, an advantage of the method is that it can be easily extended to deal with $n$-dimensional cooperative DDEs.

\section*{Appendix}

For the sake of completeness, we include here the proof of Theorem 2.1, since the arguments in   [1], based on the theory of cooperative {\it autonomous} DDEs,  do not apply directly to equations of the form  \eqref{2.1} due to  the presence of time-varying delays.  See \cite{faria,kuang} for related results.
\med

\begin{proof}[Proof of Theorem \ref{thm21}] Eq. \eqref{2.1} has the form \eqref{1.4} with $t_0=0$ and 
$$f(t,\var)=\rho(t)\Big[R\big (\var(-\tau_1(t)),\dots, \var(-\tau_m(t))\big )-D(\var(0))\Big].$$
We now consider separately the cases $K=0$ and $K>0$ in (A3).

(i) If $K=0$, we set $S=C_0$ in Lemma \ref{lem21}.  Take $t\ge 0$ and $\var\in C$ with $0\le \var(\th)<\var(0)$ for $\th\in[-\tau,0)$. From (A2), (A3), 
$f(t,\var)\le \rho(t)[R(\var(0),\dots, \var(0))-D(\var(0))]<0.$
From Lemma \ref{lem21}, we deduce that all solutions  $x(t)=x(t;\var)$ of \eqref{2.1} are defined and bounded on $[0,\infty)$, and satisfy $0\le x(t)\le \|\var\|_\infty,t\ge 0.$ In particular, $x=0$ is a stable equilibrium. To prove that 0 is a global attractor, we need to prove that $\bar x:=\limsup_{t\to\infty} x(t)=0$ for any  nonnegative solution $x(t)$.

Arguing by contradiction, we suppose that $\bar x>0$, and take a sequence $t_n\to\infty$ such that $\dot x(t_n)\to 0$ and $x(t_n)\to \bar x$ as $n\to\infty$.  In order to simplify the notation, denote $R_0(x):=R(x,\dots,x)$, $x\ge 0$. Fix $\vare >0$. For $n$ sufficiently large, we have $x(t_n+\th)\le \bar x+\vare, \, 0\le \th\le 0$, thus
$$\dot x(t_n)\le  \rho(t_n)[R_0(\bar x+\vare)-D(x(t_n))].$$
Since $\rho(t_n)\ge \rho_0>0,\lim_n\dot x(t_n)=0, \lim_nD((x(t_n))=D(\bar x)$ and
$$R_0(\bar x+\vare)-D(\bar x)\to R_0(\bar x)-D(\bar x)<0\q {\rm as}\q \vare\to 0^+,$$
this leads to
$0\le \rho_0 [R_0(\bar x)-D(\bar x)]<0$,
which is a contradiction. Hence $\bar x=0$.

\med

(ii) If $K>0$, we effect the change of variables $y(t)={{x(t)}\over K}-1$, which transforms \eqref{2.1}  into
\begin{equation*}
\dot y(t)={{\rho(t)}\over K}\left [ R\big(K+Ky(t-\tau_1(t)),\dots, K+Ky(t-\tau_m(t))\big )-D(K+Ky(t))\right ],\ t\ge 0,\eqno{\rm (A.1)}
\end{equation*}
 for which $S=\{ \var \in C:\var(\th)\ge  -1$ for $\th\in [-\tau,0), \var(0)>-1\}$ is the set of admissible initial conditions. Using again \eqref{2.2} and  the fact that $R$ is nondecreasing on each variable, one sees that  (A.1) satisfies condition (h2). Thus, Lemma \ref{lem21} implies that all solutions  $x(t)=x(t;\var)$ of \eqref{2.1} are global and bounded, and that the equilibrium $x=K$ is  stable.

For solutions $y(t)=y(t;\var)$ of (A.1) (with $\var \in S$), define now
$-v=\liminf_{t\to\infty} y(t)$, $u=\limsup_{t\to\infty} y(t)$. We have $-1\le -v\le u<\infty$. It suffices to show that $\max (u,v)=0$. 

{\it Case 1}: If $\max (u,v)=u>0$, we take a sequence $t_n\to\infty$ such that $\dot y(t_n)\to 0$ and $y(t_n)\to u$ as $n\to\infty$. Reasoning as above, for any $\vare>0$ small and $n$ large, we obtain
$$
\dot y(t_n)\le  {{\rho(t_n)}\over K}\Big  [R_0(K(1+u+\vare))-D(K(1+y(t_n)))\Big ]$$
with $\rho(t_n)\ge \rho_0>0$,  and
$R_0(K(1+u+\vare))-D(K(1+y(t_n)))\to R_0(K(1+u))-D(K(1+u)) $
as $\vare\to 0^+$. Hence $0\le R_0(K(1+u))-D(K(1+u))$, which  is not possible with $u>0$.

\smallskip

{\it Case 2}: If  $\max (u,v)=v>0$, from the above case we may consider $u<v$.  We first prove that $v<1$. Choose $\vare>0$ small so that $u+\vare <v$, and $t_0$ such that $y(t)\le u+\vare$ for $t\ge t_0+\tau.$
By Lemma \ref{lem21}, the solution $y(t)$ satisfies
$$|y(t)|\le \max_{s\in [t_0-\tau,t_0]} |y(s)|\le \max \{u+\vare,\max_{s\in [t_0-\tau,t_0]} (-y(s))\}=:\ell,\q t\ge t_0.$$
In particular, $y(t)\ge -\ell>-1$ for $t\ge t_0$, thus $-v>-1$. We now choose a sequence $s_n\to \infty$ with  $\dot y(s_n)\to 0$ and $y(s_n)\to -v$ as $n\to\infty$. Proceeding along the lines of the above case, we now get
$0\ge R_0(K(1-v))-D(K(1-v))$, which contradicts \eqref{2.2}. The proof is complete.\end{proof}

\section*{Acknowledgement} The research was supported by Funda\c c\~ao para a Ci\^encia e a Tecnologia (Portugal),  PEst-OE/\-MAT/\-UI0209/2011.

\end{document}